\newcommand{\Aut}{\ensuremath{\operatorname{Aut}}}
\newcommand{\id}{\ensuremath{\text{\rm id}}}
\definecolor{cupgreen}{rgb}{0,0.498,0.208}
\definecolor{cupblue}{rgb}{0,0,.5}
\definecolor{cupred}{rgb}{1,0.04,0}
\definecolor{cuppink}{rgb}{0.925,0,0.545}
\definecolor{cupmagenta}{rgb}{0.624,0.161,0.424}
\definecolor{cupbrown}{rgb}{0.71,0.212,0.133}
\definecolor{cupgreen}{rgb}{0,0,0}
\definecolor{cupblue}{rgb}{0,0,0}
\definecolor{cupred}{rgb}{0,0,0}
\definecolor{cuppink}{rgb}{0,0,0}
\definecolor{cupmagenta}{rgb}{0,0,0}
\definecolor{cupbrown}{rgb}{0,0,0}
\definecolor{TITLE}{rgb}{0,0,0}
\definecolor{AUTHOR1}{rgb}{0.00,0.59,0.00}
\definecolor{AUTHOR2}{rgb}{0.50,0.00,1.00}
\definecolor{SECTION}{rgb}{0.50,0.00,1.00}
\definecolor{THM}{rgb}{0.8,0,0.1}
\definecolor{SEC}{rgb}{0,0,1}
\newtheorem{theorem}{{\color{THM} Theorem}}[section]
\newtheorem{lemma}{{\color{THM} Lemma}}[section]
\theoremstyle{definition}
\newtheorem{example}[theorem]{{\color{THM}Example}}
\numberwithin{equation}{section}
\DeclareRobustCommand{\stirling}{\genfrac\{\}{0pt}{}}
\begin{document}
	\title{The number of distinguishing colorings  of a Cartesian product graph}
	
	\author{Saeid Alikhani}
	\email{alikhani@yazd.ac.ir}
	\affiliation{Department of Mathematical Sciences, Yazd University, 89195-741, Yazd, Iran.}
	
	\author{Mohammad H. Shekarriz}
	\email{mhshekarriz@yazd.ac.ir}
	\affiliation{Department of Mathematical Sciences, Yazd University, 89195-741, Yazd, Iran.}	
	
	\vspace{2cm} 
	\bigskip
	\begin{abstract}
		A vertex coloring is called distinguishing if the identity is the only automorphism that can preserve it. The distinguishing threshold $\theta(G)$ of a graph $G$ is the minimum number of colors $k$ required that any arbitrary $k$-coloring of $G$ is distinguishing. In this paper, we calculate the distinguishing threshold of a Cartesian product graph. Moreover, we calculate the number of non-equivalent distinguishing colorings of grids.
		\vspace{0.3cm}
		
		\noindent Mathematics Subject Classification (2020):  05C15, 05C76.
	\end{abstract}
	
	\keywords{distinguishing coloring, Cartesian product, distinguishing threshold, holographic coloring}

	\maketitle

	\section{Introduction}
	A (vertex, edge, total, proper, etc.) coloring of a graph $G$ is called distinguishing (or symmetry breaking) if no non-identity automorphism of $G$ preserves it, and the distinguishing number, denoted by $D(G)$, is the smallest number of colors required for vertex coloring to be distinguishing. This terminology has been introduced by Albertson and Collins~\cite{albertson1996symmetry} in 1996 and initiated many results and generalizations. The concept is however at least two decades older, but it was called \emph{asymmetric coloring}, e.g. see \cite{Babai1977} by Babai.
	
	When vertex coloring of a graph $G$ is distinguishing, we say that this coloring \emph{breaks} all the symmetries of $G$. A $k$-\textit{distinguishing coloring} is a coloring that uses exactly $k$ colors. For a positive integer $d$, a graph $G$ is  {\it $d$-distinguishable} if there exists a distinguishing vertex coloring with $d$ colors. The distinguishing number of some classes of graphs are as follows: $D(K_{n})=n$, $D(K_{n,n})=n+1$, $D(P_n) = 2$ for $n\geq 2$, $D(C_3)=D(C_4)=D(C_5)=3$ while $D(C_n)=2$ for $n\geq 6$ \cite{albertson1996symmetry}.
	
	Because many graphs in applications appear to be a product of smaller graphs, one interesting natural question for every graph theoretical index is to consider it for product graphs, especially for Cartesian products. For example, Bogstad and Cowen \cite{Bogstad2004} proved that for $k \geq 4$, every hypercube $Q_k$ of dimension $k$ 
	is $2$-distinguishable. Moreover, Imrich and Klav{\v{z}}ar \cite{Imrich2006CartPower} showed that the distinguishing number of Cartesian powers of a connected graph $G$ is equal to two except for $K_2^2, K_3^2, K_2^3$. Meanwhile, Imrich, Jerebic, and Klav\v{z}ar \cite{Imrich2008CartComp} proved that Cartesian products of relatively prime graphs whose sizes are close to each other can be distinguished with a small number of colors. 
	
	Several indices, relevant to the distinguishing number, were introduced by Ahmadi, Alinaghipour, and Shekarriz~\cite{ahmadi2020number}. Two colorings $c_1$ and $c_2$ of a graph $G$ are \emph{equivalent} if there is an automorphism $\alpha$ of $G$ such that $c_1 (v) = c_2 (\alpha(v) )$ for all $v\in V (G)$. The number of non-equivalent distinguishing colorings of a graph $G$ with $\{1,\ldots,k\}$ (as the set of admissible colors) is denoted by $\Phi_k (G)$, while the number of non-equivalent $k$-distinguishing colorings of a graph $G$ with $\{1,\ldots,k\}$ is denoted by $\varphi_k (G)$. Evidently, when $G$ has no distinguishing colorings with exactly $k$ colors, we have $\varphi_k (G) =0$. For a graph $G$, the \textit{distinguishing threshold}, $\theta(G)$, is the minimum number $t$ such that for any $k\geq t$, any arbitrary $k$-coloring of $G$ is distinguishing \cite{ahmadi2020number}. 
	
	Shekarriz et al.~\cite{Shekarriz2023theta} alternatively defined the distinguishing threshold terms of $\mathrm{Aut}(G)$. For a non-identity automorphism $\alpha$, let $c(\alpha)$ be the number of cycles of $\alpha$ as a permutation and put $c(\id)=0$. Then the distinguishing threshold of a graph $G$ is 
	\begin{equation}\label{max-lem}
		\theta(G) = 1 + \max\left\{c(\alpha) \; :\; \alpha\in\mathrm{Aut}(G)\right\}.
	\end{equation}
	
	To date, the distinguishing threshold has been studied for the Johnson graphs~\cite{Shekarriz2023theta}, the corona product, vertex-sum, rooted product, and lexicographic product~\cite{Shekarriz2023vsum}. It is shown to be useful to calculate the distinguishing number of disconnected graphs and the lexicographic product~\cite{ahmadi2020number,Shekarriz2023vsum}. In this paper, we consider the distinguishing threshold for the Cartesian products.
	
	Here in this paper, we want to consider the following questions. Let $G$ and $H$ be two prime graphs. What can be said about $\theta (G \Box H)$ in terms of $\theta (G)$ and $\theta (H)$? Moreover, what can be said about $\varphi (G \Box H)$ or $\Phi_k (G \Box H)$ in terms of relevant indices of factors? If it is not possible to answer these questions rapidly, can we find an explicit formula for these indices when we limit our attention to paths and cycles?
	
	Some preliminaries of the Cartesian product, distinguishing coloring and the distinguishing threshold, are introduced in Section \ref{preliminaries}. In Section \ref{Lem-Sec}, a generalization of Lemma 4.1 by Gorzkowska and Shekarriz~\cite{ola2018} is presented. The lemma gives a necessary and sufficient condition on the coloring of the Cartesian product of graphs to be distinguishing coloring. Afterwards, we consider $\theta (G\Box H)$, $\Phi_k(G\Box H)$ and $\varphi_k (G\Box H)$ in Sections \ref{main1} and \ref{main2}.
	
	\section{Preliminaries}\label{preliminaries}
	
	\subsection{Distinguishing indices}
	
	The number of non-equivalent distinguishing colorings of a graph $G$ with $\{1, \ldots, k\}$ as the set of admissible colors, $\Phi_k(G)$, can be calculated if we know the number of non-equivalent $i$-distinguishing colorings of $G$, $\varphi_i(G)$, for all $i\in\{1,\ldots,k\}$. The relation is given as follows~\cite{ahmadi2020number}:
	\begin{equation}\label{Phi-phi}
		\Phi_k(G) = \sum_{i=D(G)}^k {k\choose i} \varphi_i(G).
	\end{equation}
	
	Moreover, one can easily verify that for $n,k \geq 1$, we have $\Phi_k(P_n) = \frac{1}{2}(k^n - k^{\lceil \frac{n}{2} \rceil})$ and for $n\geq 2$ and $k \geq n$ we have $\Phi_k(K_n) = {k \choose n}$ \cite{ahmadi2020number}. The following theorem, in which the notation $\stirling{n}{k}$ denotes the Stirling number of the second kind, reveals the importance of the distinguishing threshold.
	
	\begin{theorem}\label{Aut(G)}  \textnormal{\cite{ahmadi2020number}}
		Let $G$ be a graph on $n$ vertices. For any $k\geq \theta(G)$ we have
		\[\varphi_k (G)=\frac{k! \stirling{n}{k}}{|\Aut(G)|}.\]
	\end{theorem}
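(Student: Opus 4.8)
The plan is to turn the count $\varphi_k(G)$ into an orbit-counting problem. The starting observation is that, since $k \ge \theta(G)$, the defining property of the threshold guarantees that \emph{every} coloring of $G$ using exactly $k$ colors is distinguishing. Hence the set of $k$-distinguishing colorings with palette $\{1,\dots,k\}$ coincides with the set of \emph{surjective} maps $V(G)\to\{1,\dots,k\}$. A standard enumerative fact then supplies the numerator: the number of surjections from an $n$-element set onto a $k$-element set is $k!\,\stirling{n}{k}$, since $\stirling{n}{k}$ counts the partitions of $V(G)$ into $k$ nonempty color classes and the factor $k!$ records the ways to attach the labels $1,\dots,k$ to those classes.

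Next I would encode the equivalence relation as a group action. Let $\Aut(G)$ act on colorings by $(\alpha\cdot c)(v)=c(\alpha^{-1}(v))$; two colorings $c_1,c_2$ are equivalent in the sense defined in the paper (there exists $\alpha\in\Aut(G)$ with $c_1(v)=c_2(\alpha(v))$ for all $v$) precisely when they lie in a common $\Aut(G)$-orbit, so $\varphi_k(G)$ is the number of orbits of this action restricted to the surjective $k$-colorings. Observe that the action permutes vertices only and never the color labels, so surjectivity—and the property of using exactly $k$ colors—is preserved; thus the action genuinely restricts to the finite set of surjective $k$-colorings, and orbit counting on this set is legitimate.

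The central step is to evaluate each orbit size via the orbit–stabilizer theorem. By definition, a coloring $c$ is distinguishing exactly when the only $\alpha$ with $c\circ\alpha=c$ is $\id$; but $c\circ\alpha=c$ is precisely the condition $\alpha\cdot c=c$, so ``$c$ is distinguishing'' is the same as ``the stabilizer of $c$ is trivial.'' Since every surjective $k$-coloring is distinguishing for $k\ge\theta(G)$, each orbit has trivial stabilizer and hence, by orbit–stabilizer, cardinality exactly $|\Aut(G)|$. Dividing the total number $k!\,\stirling{n}{k}$ of such colorings by the common orbit size yields $\varphi_k(G)=k!\,\stirling{n}{k}/|\Aut(G)|$, as claimed.

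The argument is in the end largely bookkeeping, and the only point requiring care—the ``main obstacle'' such as it is—is the clean identification of ``distinguishing'' with ``trivial stabilizer,'' together with the verification that the $\Aut(G)$-action restricts to the set of surjective $k$-colorings so that every orbit has full size $|\Aut(G)|$ rather than a proper divisor thereof. Once these two facts are in place, the formula drops out immediately.
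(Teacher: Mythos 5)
Your proof is correct, and it is essentially the same argument as the one behind this theorem in the cited source \cite{number} (the paper here states the result without proof): for $k\geq\theta(G)$ every surjective $k$-coloring is distinguishing, the $\Aut(G)$-action on the $k!\stirling{n}{k}$ surjections therefore has only trivial stabilizers, and orbit--stabilizer gives orbits of size exactly $|\Aut(G)|$, yielding the formula. Your identification of ``distinguishing'' with ``trivial stabilizer'' and the check that the action preserves surjectivity are precisely the right points of care.
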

	
	The distinguishing threshold for some classes of graphs is already calculated. For example, we have $\theta(P_m)=\lceil\frac{m}{2}\rceil+1$ and $\theta(C_n)=\lfloor\frac{n}{2}\rfloor+2$ for $m\geq 2$ and $n\geq 3$~\cite{ahmadi2020number}.
	
	
	

	
	Shekarriz et al. also proved that the only graphs with the distinguishing threshold 2 are $K_2$ and $\overline{K}_2$. Moreover, they studied graphs whose distinguishing thresholds are 3 and calculated the threshold for graphs in the Johnson scheme~\cite{Shekarriz2023theta}. Furthermore, in another paper, Shekarriz et al. calculated the threshold for some product graphs, such as the corona and the lexicographic products~\cite{Shekarriz2023vsum}.

	\subsection{The Cartesian product of graphs}
	
	The {\it Cartesian product} of graphs $G$ and $H$ is a graph, denoted by $G \Box H$, whose vertex set is $V(G) \times V(H)$, and two vertices $(g, h)$, $(g', h')$ are adjacent if either $g = g'$ and $h h' \in E(H)$, or $g g' \in E(G)$ and $h = h'$. Usually $G \Box G$ is shown by $G^2$ and the {\it $k$-th Cartesian power} of $G$ is $G^k = G \Box G^{k-1}$. A graph $G$ is {\it prime} if it cannot be represented as the Cartesian product of two graphs non-isomorphic with $G$. Two graphs $G$ and $H$ are {\it relatively prime} if they do not have a common non-trivial factor \cite{HIK2011}. 
	
	Every graph has a unique prime factorization with respect to the Cartesian product~\cite{HIK2011}. When factorization is known, the automorphism group can also be expressed by the following theorem by Imrich (and independently by Miller).
	
	\begin{theorem} \textnormal{\cite{HIK2011}} \label{autG}
		Suppose $\psi$ is an automorphism of a connected graph $G$ with prime factor decomposition $G = G_1 \Box G_2 \Box \cdots \Box G_k$. Then there is a permutation $\pi$ of the set $\{1, 2, \dots , k\}$ and there are isomorphisms $\psi_i \colon G_{\pi(i)} \mapsto G_i$, $i=1, \dots, k$, such that
		$$\psi(x_1, x_2, \dots, x_k) = (\psi_1 (x_{\pi(1)}), \psi_2 (x_{\pi(2)}), \dots, \psi_r (x_{\pi(k)})).$$
	\end{theorem}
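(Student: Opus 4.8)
The plan is to derive everything from the uniqueness of the prime factorization of connected graphs, which has already been invoked above. The central idea is that an automorphism can neither create nor destroy the product structure: it must carry the prime factorization of $G$ onto another prime factorization of the same graph, and uniqueness forces the two to agree up to a permutation of the factors.

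First I would set up the layer formalism. For an index $i$ and a base vertex $a = (a_1, \dots, a_k)$, the \emph{$G_i$-layer} through $a$ is the set of vertices differing from $a$ in at most the $i$-th coordinate; the subgraph it induces is isomorphic to $G_i$, and the edges lying inside such layers are exactly the edges that ``move in the $i$-th factor.'' This yields a colouring of $E(G)$ by the $k$ factor indices. The key structural claim is that this colouring is canonical: because the prime factorization is unique, an automorphism $\psi$ sends the factorization $G = G_1 \Box \cdots \Box G_k$ to a second prime factorization of $G$ whose factors are the $\psi$-images of the original layers, and uniqueness supplies a permutation $\pi$ of $\{1, \dots, k\}$ together with isomorphisms $\psi_i \colon G_{\pi(i)} \to G_i$ matching the two decompositions. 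Concretely, $\psi$ must send each $G_{\pi(i)}$-layer onto a $G_i$-layer, i.e.\ it permutes the colour classes of the edge colouring according to $\pi$.

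Next I would extract the coordinate functions. Fixing the base point and restricting $\psi$ to the $G_{\pi(i)}$-layer through it gives an isomorphism onto a $G_i$-layer, and reading off the varying coordinate defines the map $\psi_i$. It then remains to verify that $\psi_i$ does not depend on the values of the other coordinates, so that $\psi$ genuinely acts as $\psi(x_1, \dots, x_k) = (\psi_1(x_{\pi(1)}), \dots, \psi_k(x_{\pi(k)}))$. This is where connectedness is essential: moving the base point along an edge in the $j$-th factor changes only the $j$-th input coordinate and, since $\psi$ respects the colour classes, only the $\pi^{-1}(j)$-th output coordinate. Propagating this along paths, which exist by connectedness, shows that each $\psi_i$ is well defined on all of $G_{\pi(i)}$ and that the coordinates decouple.

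The main obstacle is the canonicity claim in the second paragraph, namely that $\psi$ must respect the product colouring and hence permute the prime factors. Everything downstream is bookkeeping once this is known, but establishing it rigorously requires the full strength of unique prime factorization for connected graphs, equivalently the fact that the product relation on the edge set is determined by the isomorphism type of $G$ up to a relabelling of the colours. Handling disconnected graphs, or graphs admitting non-unique factorizations, is precisely where the connectedness hypothesis enters and cannot be dropped.
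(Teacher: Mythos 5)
You should first be aware that the paper contains no proof of Theorem \ref{autG} for you to be compared against: this is the classical theorem of Imrich (and, independently, Miller) on automorphisms of Cartesian products, imported verbatim from \cite{hik}. So your proposal can only be measured against the standard argument in the literature, which is indeed the one you outline: canonicity of the edge colouring induced by the prime factorization, hence automorphisms permute the colour classes up to a permutation $\pi$ of the factors, then coordinate extraction and a connectedness argument to decouple the coordinates. Your final bookkeeping step is correct: once one knows that $\psi$ carries $G_{\pi(i)}$-layers to $G_i$-layers, restricting $\psi$ to a layer through a base point defines $\psi_i$, and moving the base point along a single edge of colour $j \neq \pi(i)$ changes the image only in coordinate $\pi^{-1}(j) \neq i$, so connectedness propagates the independence of $\psi_i$ from the remaining coordinates across all of $G$.

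The step that needs repair is exactly the crux you single out, and the problem is your word ``equivalently.'' Unique prime factorization in its usual weak form --- any two prime factorizations of a connected graph have the same number of factors, pairwise isomorphic after reordering --- gives you nothing here, and your argument is vacuous if that is the version invoked. Indeed, the factorization obtained by transporting the layer structure along the bijection $\psi$ has factors isomorphic to the $G_i$ \emph{by construction}, so comparing isomorphism types of the two factorizations is a tautology; what you actually need is that the two factorizations induce the same partition of $E(G)$ into colour classes (the same product relation $\sigma$), which is precisely the assertion ``$\psi$ maps layers to layers'' that you are trying to establish. That stronger, relation-level uniqueness is true for connected graphs and is what \cite{hik} proves --- the product relation of the prime factorization is built canonically from relations such as the Djokovi\'c--Winkler relation $\Theta$, which are defined from the metric structure of $G$ alone and hence preserved by every automorphism --- but it does not follow from the weak statement by any formal manipulation; the implication from weak to strong is essentially the theorem itself. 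Your proof is therefore correct provided you cite the strong form explicitly; as written, ``equivalently'' conceals the entire content of the step you correctly identify as the main obstacle.
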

	
	As a result of Theorem \ref{autG}, when $G=G_1 \Box G_2 \Box \cdots \Box G_k$, the product group $\oplus_{i=1}^{k} \mathrm{Aut}(G_i)$ is a subgroup of $\mathrm{Aut}(G)$. Here, we denote this subgroup by $\mathrm{Aut}^{F}(G)$.
	
	For each factor $G_i$ let the vertex set be $V(G_i) = \{x_{i1}, x_{i2}, \dots, x_{in_i}\}$, where $n_i=\vert G_i\vert$. Then every vertex of the Cartesian product is of the form $(x_{1j_1},x_{2j_2}, \dots , x_{kj_k})$, where $x_{ij_i} \in V(G_i)$. Two vertices of the Cartesian product form an edge 
	\begin{equation}
		(x_{1j_1}, x_{2j_2}, \dots , x_{kj_k})(x_{1l_1}, x_{2l_2}, \dots , x_{kl_k})
	\end{equation}
	if there exists exactly one index $i=1,\ldots,k$ such that $x_{ij_i} x_{il_i}$ is an edge of the factor graph $G_i$ and $x_{tj_t}=x_{tl_t}$ for all indices $t\neq i$. Given a vertex $v = (v_1 , v_2 , \ldots , v_k )$ of the product $G = G_1 \Box G_2 \Box \cdots \Box G_k$, the \emph{$G_i$-layer through $v$} is the induced subgraph
	\begin{equation}\label{layer_through_v}
		G_{i}^{v}=G \left[ \{ x \in V (G) \ \vert \  p_{j}(x) = v_j \text{ for }j\neq i\}\right],
	\end{equation}
	where $p_j$ is the projection mapping to the $j^\text{th}$-factor of $G$~\cite{HIK2011}.
	
	By \emph{$i^{\text{th}}$-quotient subgraph of $G$} we mean the graph
	\begin{equation}\label{quotient}
		Q_{i}=G \diagup G_{i} \simeq G_1 \Box \cdots \Box G_{i-1} \Box G_{i+1} \Box \cdots \Box G_k.
	\end{equation}
	It is also evident that $G \simeq G_i \Box Q_i$~\cite{HIK2011}.

	\section{The holographic coloring}\label{Lem-Sec}
	
	The material mentioned in this section is mostly a generalization of~\cite[Lemma 4.1]{ola2018}. Here, we need it two-sided, so that it can be used to calculate the number of distinguishing colorings of an arbitrary Cartesian product graph. To do so, we have altered some notions therein as follows. 
	
	Suppose that $G=G_1 \Box G_2 \Box \cdots \Box G_k$ for some $k\geq 2$ is a connected graph decomposed into a prime factorization, and $f$ is a total coloring. Note that a vertex or an edge coloring can be easily transformed into a total coloring (by giving all edges or vertices a fixed color). Therefore, $f$ can also be a vertex or an edge coloring.  
	
	For $i=1,\ldots,k$, let $V(G_{i})=\{ 1_{i},\ldots, m_i \}$ and for each $j=1,...,m$, consider 
	\begin{equation}
		u_j = ( 1_{1},1_{2}, \ldots, 1_{i-1}, j_{i}, 1_{i+1},\ldots, 1_{k} ),
	\end{equation}
	where $1_r$ is the first vertex of $G_r$ in our fixed ordering. It is a vertex of $G$ and we can speak of $G_i^{u_j}$ which is defined in Equation~\ref{layer_through_v}.
	
	Meanwhile, for each $i$, the set of colors that we use are equivalent classes of colored quotients layers $Q_i^v$ under some equivalent relations we introduce here. Suppose that $\alpha\in \mathrm{Aut}(Q_{i})$. We define a map, namely $\varphi_{\alpha}$, from $Q_i^{u_j}$ onto $Q_i^{u_t}$, using $\alpha$ and our fixed ordering of $Q_i$, so that $\varphi_{\alpha}$ maps the vertex of $Q_i^{u_j}$ with the same ordering as $x\in Q_i$ onto the vertex of $Q_i^{u_t}$ with the same ordering as $\alpha(x)$. In this case, it is evident that $\varphi_\alpha$ is an isomorphism and we say it is a \emph{lifting of $\alpha$}. Roughly speaking, the lifting of an automorphism produces an isomorphism from one copy of $Q_i$ onto another.
	
	The coloring $f$ induces a (total) coloring on $Q_i^{u_j}$. This colored graph is denoted here by $\check{Q}_{i}^{u_j}= (Q_{i}^{u_j},f)$. For an automorphism $\alpha\in\Aut(Q_{i})$, we say that the color $\check{Q}_{i}^{u_j}$ is \emph{$\alpha$-equivalent} to $\check{Q}_{i}^{u_t}$ if there is a (total) color-preserving isomorphism $\varphi: Q_{i}^{u_j}\longrightarrow Q_{i}^{u_t}$ which is a lifting of $\alpha$ or $\alpha^{-1}$.
	
	
	Let $e=v_{i}w_i\in E(G_i)$ and $\overline{Q_i^e}$ be the graph isomorphic to $Q_i$ constructed as follows: its vertex set consists of edges of $G$ of the form $$(u_i, x)(v_i, x)=(x_{1},\ldots,x_{i-1}, v_{i}, x_{i+1},\dots , x_{k})(x_{1},\ldots,x_{i-1}, w_{i}, x_{i+1}, \dots , x_{k})$$ for $x\in V(Q_i)$ (i.e. $x_j\in V(G_j)$, for $j\neq i$). If $x$ and $y$ are adjacent in $Q_i$, then vertices $(u_i, x)(v_i, x)$ and $(u_i, y)(v_i, y)$ are adjacent in $\overline{Q_i^e}$. Each vertex $(u_i, x)(v_i, x)$ of $\overline{Q_i^e}$ is an edge of $G$, so it is colored by the total coloring $f$. Therefore, $f$ induces a vertex coloring on $\overline{Q_i^e}$, and this colored graph is denoted here by $\hat{\overline{Q_i^e}}=(\overline{Q_i^e},f)$. Similarly, the colored graph $\hat{\overline{Q_i^e}}$ is \emph{$\alpha$-equivalent} to $\hat{\overline{Q_i^{e'}}}$ if there is a vertex-color-preserving isomorphism $\vartheta:\overline{Q_i^e} \longrightarrow \overline{Q_i^{e'}}$ which is a lifting of $\alpha$ or $\alpha^{-1}$.
	
	For each $i\in\{1,\ldots,k\}$, we can color vertices and edges of $G_i$ by colored graphs $Q_{i}$s as follows; let $G_i^f$ be the total coloring of $G_i$ in which each vertex $j\in V(G_i)$ is colored by $\check{Q}_{i}^{u_j}$ and each edge $e=j {\ell}\in E(G_i)$ is colored by $\hat{\overline{Q_i^{e}}}$. To distinguish this coloring from the similar coloring of~\cite{ola2018}, this total coloring of $G_i$ is called the \emph{holographic coloring of $G_i$ induced by $f$}. Finally, two coloring $G_i^f$ and $G_j^f$ are equivalent if there is a total-color-preserving isomorphism from one to another. In this case we write $G_i^f \simeq G_j^f$
	
	Now, we can articulate the desired statement. 
	
	\begin{lemma}\label{AUL2}
		Let $k\geq 2$ and $G=G_1 \Box G_2 \Box \cdots \Box G_k$ be a connected graph decomposed into Cartesian prime factors. A (total) coloring $f$ is a distinguishing coloring for $G$ if and only if for each $i=1,\ldots, k$  we have
		
		\begin{itemize}
			\item[i.] $G_i^f \not\simeq G_j^f$ for all $j=1,\ldots,k$ such that $j\neq i$, and
			\item[ii.] for each $\alpha\in \mathrm{Aut}(Q_i)$ and for each non-identity $\beta\in\Aut(G_i)$, there is a vertex $v\in G_i$ or an edge $e\in E(G_i)$ such that either $\check{Q}_i^v$ and $\check{Q}_i^{\beta(v)}$ or $\overline{Q_i^e}$ and $\overline{Q_i^{\beta(e)}}$ are not $\alpha$-equivalent.
		\end{itemize}
	\end{lemma}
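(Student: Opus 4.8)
The plan is to reduce everything to the description of $\Aut(G)$ given by Theorem~\ref{autG} and then to decide, for each type of automorphism, whether it can preserve $f$. Concretely, I would prove the logically equivalent statement that $f$ is \emph{not} distinguishing if and only if condition (i) fails or condition (ii) fails, and obtain the lemma by negation. The whole argument rests on the observation that, writing $G\cong G_i\square Q_i$, the holographic coloring $G_i^f$ records exactly the induced coloring $\check{Q}_i^{v}$ of each quotient layer over a vertex $v\in V(G_i)$ together with the induced coloring $\hat{\overline{Q_i^e}}$ over each edge $e\in E(G_i)$; hence $f$ can be recovered from $G_i^f$, and color-preserving maps of $G$ translate into color-preserving features of the $G_i^f$.

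First I would show that the existence of a non-identity $f$-preserving $\psi\in\Aut(G)$ forces (i) or (ii) to fail, splitting on the factor permutation $\pi$ attached to $\psi$ by Theorem~\ref{autG}. If $\pi=\id$, then $\psi=(\psi_1,\dots,\psi_k)$ with each $\psi_r\in\Aut(G_r)$ and some $\psi_i\neq\id$; setting $\beta=\psi_i$ and letting $\alpha\in\Aut(Q_i)$ be the product of the remaining $\psi_r$, the restriction of $\psi$ to each layer $Q_i^{v}$ is precisely a lifting of $\alpha$ carrying $Q_i^{v}$ onto $Q_i^{\beta(v)}$, and color preservation of $\psi$ makes it color preserving. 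Thus $\check{Q}_i^{v}$ and $\check{Q}_i^{\beta(v)}$, and likewise $\overline{Q_i^e}$ and $\overline{Q_i^{\beta(e)}}$, are $\alpha$-equivalent for \emph{every} $v$ and $e$, which is exactly the negation of (ii) for the pair $(\alpha,\beta)$ with $\beta\neq\id$. If instead $\pi\neq\id$, choose $r$ with $\pi(r)=m\neq r$; then $\psi$ maps each $Q_m$-layer onto a $Q_r$-layer by a color-preserving isomorphism, while $\psi_r\colon G_m\to G_r$ matches $v$ with $\psi_r(v)$ so that $\check{Q}_m^{v}$ corresponds to $\check{Q}_r^{\psi_r(v)}$ (and the edge data correspond as well). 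This upgrades $\psi_r$ to a total-color-preserving isomorphism $G_m^f\simeq G_r^f$ with $m\neq r$, contradicting (i).

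Conversely, I would build an $f$-preserving non-identity automorphism from the failure of either condition. If (i) fails, say $G_m^f\simeq G_r^f$ with $m\neq r$, the total-color-preserving isomorphism supplies compatible color-preserving isomorphisms between the quotient layers and their edge analogues, which assemble, using $G\cong G_m\square Q_m\cong G_r\square Q_r$, into an automorphism of $G$ transposing the $m$-th and $r$-th factors; it preserves $f$ by construction and is non-identity because it moves a factor. If (ii) fails, fix the witnessing $\alpha\in\Aut(Q_i)$ and $\beta\in\Aut(G_i)\setminus\{\id\}$ for which all the relevant quotient and edge colorings are $\alpha$-equivalent, and set $\psi(v,x)=(\beta(v),\alpha(x))$ on $G_i\square Q_i$; the $\alpha$-equivalences are precisely the statement that $\psi$ preserves $f$ on every layer and across the $G_i$-direction edges, and $\psi\neq\id$ since $\beta\neq\id$. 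Negating the resulting equivalence yields the lemma.

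The step I expect to be the main obstacle is this last construction together with its reverse reading: one must check that the layerwise liftings glue into a single genuine automorphism of $G$ rather than a mere collection of fiber isomorphisms. The delicate points are (a) that the relation is defined up to a lifting of $\alpha$ \emph{or} $\alpha^{-1}$, which makes $\alpha$-equivalence symmetric but forces one to orient the chosen liftings consistently along the $\beta$-orbits when assembling $\psi$, and (b) that $f$ is a \emph{total} coloring, so the gluing must simultaneously respect the vertex data $\check{Q}_i^{v}$ and the edge data $\hat{\overline{Q_i^e}}$; this is exactly why condition (ii) controls both vertices and edges, the edge-equivalences guaranteeing compatibility of the fiber maps across adjacent layers. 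Verifying that these compatibilities are precisely what Theorem~\ref{autG} needs in order to promote the local color-preserving isomorphisms to honest elements of $\Aut(G)$ is the crux of the proof.
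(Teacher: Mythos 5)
Your overall architecture is the same as the paper's: negate the statement, invoke Theorem~\ref{autG}, split on the factor permutation $\pi$, and convert failure of (i) or (ii) into a non-identity color-preserving automorphism and vice versa. Three of your four implications go through (modulo the paper's own vagueness about what $G_i^f\simeq G_j^f$ means). The genuine gap is exactly the step you flag as ``the crux'' and then defer. When (ii) fails you set $\psi(v,x)=(\beta(v),\alpha(x))$ and assert that the $\alpha$-equivalences are precisely the statement that $\psi$ preserves $f$. They are not: by the paper's definition, $\alpha$-equivalence of $\check{Q}_i^{v}$ and $\check{Q}_i^{\beta(v)}$ only supplies a color-preserving lifting of $\alpha$ \emph{or} of $\alpha^{-1}$, the choice may vary with $v$, and your proposed repair --- orienting the liftings consistently along the $\beta$-orbits --- is impossible in general. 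Concretely, let $G=K_2\,\square\,C_5$ with $V(K_2)=\{1,2\}$, $V(C_5)=\mathbb{Z}_5$, let $a$ be a distinguishing $3$-coloring of $C_5$ (say $a(0)=a(1)=1$, $a(2)=a(3)=2$, $a(4)=3$), and set $f(1,x)=a(x)$, $f(2,x)=a(x-1)$. Take $\alpha\colon x\mapsto x+1$ and $\beta$ the transposition of $K_2$, and write $\check{Q}^{v}$ for the colored $C_5$-layer over $v\in V(K_2)$. Then $\check{Q}^{1}$ is $\alpha$-equivalent to $\check{Q}^{2}$ via the lifting of $\alpha$ (since $f(1,x)=a(x)=f(2,x+1)$), $\check{Q}^{2}$ is $\alpha$-equivalent to $\check{Q}^{1}$ via the lifting of $\alpha^{-1}$ (since $f(2,x)=a(x-1)=f(1,x-1)$), and the edge conditions hold automatically for a vertex coloring; so condition (ii) fails. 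Yet $f$ is distinguishing: by Theorem~\ref{autG} every automorphism of $G$ is $(\epsilon,\rho)$ with $\epsilon\in\Aut(K_2)$, $\rho\in\Aut(C_5)$; if $\epsilon=\mathrm{id}$, then $a=a\circ\rho$ forces $\rho=\mathrm{id}$; if $\epsilon=\beta$, then $f(1,x)=f(2,\rho(x))$ forces the automorphism $x\mapsto\rho(x)-1$ to preserve $a$, so $\rho(x)=x+1$, and then $f(2,x)=f(1,\rho(x))$ gives $a(x-1)=a(x+1)$ for all $x$, impossible for nonconstant $a$. So no non-identity color-preserving automorphism of $G$ exists at all, and no assembly of liftings, however oriented, can produce one: your step, as described, fails.

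You have in fact put your finger on a flaw that the paper's own proof shares: it too passes from the failure of (ii) to a color-preserving $\varphi=(\alpha,\beta)$ in a single sentence, and the problem is invisible in the paper's examples only because every $\alpha$ appearing there is an involution, for which liftings of $\alpha$ and $\alpha^{-1}$ coincide. The cure is to make the relation directed: require in the definition of $\alpha$-equivalence a color-preserving lifting of $\alpha$ itself, with no ``or $\alpha^{-1}$'' clause. Under that definition your construction works --- the vertex equivalences make $\psi$ color-preserving on each layer (vertices and layer edges), the edge equivalences handle the edges joining layers, and $\psi$ is an automorphism of $G$ because it is a product of automorphisms of the factors --- and your converse direction needs no change, since for $\pi=\mathrm{id}$ the restriction of a color-preserving $\psi=(\psi_1,\dots,\psi_k)$ to the layer over $v$ is literally the lifting of $\alpha=\prod_{r\neq i}\psi_r$, never of $\alpha^{-1}$. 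With the definition as printed, however, neither your proof nor the paper's is valid, because the statement itself is false.
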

	\begin{proof}
		First, suppose that for an $i=1,\ldots,k$ and an automorphism $\alpha\in \mathrm{Aut}(Q_i)$ there is a $\beta\in\Aut(G_i)$ such that $\check{Q}_i^v$ and $\check{Q}_i^{\beta(v)}$ are $\alpha$-equivalent and $\overline{Q_i^e}$ and $\overline{Q_i^{\beta(e)}}$ are also $\alpha$-equivalent. Then there is an automorphism of $G$, of the form $\varphi=(\alpha,\beta)\in\mathrm{Aut}(Q_i)\oplus\mathrm{Aut}(G_i)$, such that it preserves $f$. It is also evident that if we have $G_i^f \simeq G_j^f$ for some $j\neq i$, then the transposition of factors $i$ and $j$ is a non-identity automorphism of $G$ which also preserves $f$.
		
		Conversely, suppose that both items $i$ and $ii$ above are true. If $\varphi:G\longrightarrow G$ is a color-preserving automorphism, then by Theorem \ref{autG}, there is a permutation $\pi$ of the set $\{1, 2, \dots , k\}$ and there are isomorphisms $\psi_i \colon G_{\pi(i)} \mapsto G_i$, $i=1, \dots, k$, such that
		$$\varphi(x_1, x_2, \dots, x_k) = (\psi_1 (x_{\pi(1)}), \psi_2 (x_{\pi(2)}), \dots, \psi_k (x_{\pi(k)})).$$ Since we have $G_i^f \not\simeq G_j^f$ for all $j\neq i$, it can be deduced that $\pi$ is the identity permutation. Thus, $\varphi=(\psi_1,\ldots,\psi_k)\in\oplus_{i=1}^{k} \mathrm{Aut}(G_i)$, or equivalently, it is of the form $\varphi=(\alpha,\beta)\in\mathrm{Aut}^{F}(Q_i)\oplus\mathrm{Aut}(G_i)$. Now, we must have $\varphi=(\alpha,\beta)=(\id_{Q_i},\id_{G_i})=\id_G$ because else the condition of item $ii$ above says that $\varphi$ cannot preserve $f$. Consequently, the only automorphism of $G$ that can preserve $f$ is the identity.
	\end{proof}
	
	As noted at the beginning of this section, Lemma \ref{AUL2} must also be considered true whenever $f$ is a vertex or edge coloring. However, when $f$ is a vertex coloring of $G$, it is redundant to color each edge $e$ of $G_i$ with $\hat{\overline{Q_i^{e}}}$, because all edges of $G_i$ will receive the same color.

	It should also be noted that Lemma \ref{AUL2} remains true if the condition {\it ii} is true for each $\alpha\in \mathrm{Aut}^{F}(Q_i)$ instead of for each $\alpha\in \mathrm{Aut}(Q_i)$. As noted, $\mathrm{Aut}^{F}(Q_i)$ is a subgroup of $\mathrm{Aut}(Q_i)$ and the condition $G_i^f \not\simeq G_j^f$ for all $j\neq i$ makes it redundant to check the lemma for $\alpha\in \mathrm{Aut}(Q)\setminus \mathrm{Aut}^{F}(Q)$.


	The following examples illustrate the holographic coloring and Lemma \ref{AUL2} interpretations.
	
	\begin{example}\label{IdNotEnough}
		Let $G_1 = P_4 \Box P_5$ have a vertex coloring $f$ that makes $(2, 2)$ and $(3,4)$ red while all other vertices of $G_1$ are black, see Figure \ref{id}. This coloring is not a distinguishing one, however, we have $P_4^f \not\simeq P_5^f$ and for each pair of vertices $u,v\in P_4$, $P_4^u$ and $P_4^v$ are not $\id$-equivalent. Similarly, for each pair of vertices $u',v'\in P_5$, $P_5^{u'}$ and $P_5^{v'}$ are not $\id$-equivalent. Thus, it is not adequate to only consider one automorphism of $Q_i$ in item $ii$ of Lemma \ref{AUL2}.
	\end{example}
	
	\begin{figure}
		\[ \xygraph{
			!{<0cm,0cm>; <1cm,0cm>:<0cm,1cm>::}
			!{(1,1)}*{\bullet}="v1" !{(2,1)}*{\bullet}="v2" !{(3,1)}*{\bullet}="v3" !{(4,1)}*{\bullet}="v4" !{(5,1)}*{\bullet}="v5"
			!{(1,2)}*{\bullet}="v6" !{(2,2)}*{\bullet}="v7" !{(3,2)}*{\bullet}="v8" !{(4,2)}*{\color[rgb]{1,0,0}\bullet}="v9" !{(5,2)}*{\bullet}="v10"
			!{(1,3)}*{\bullet}="v11" !{(2,3)}*{\color[rgb]{1,0,0}\bullet}="v12" !{(3,3)}*{\bullet}="v13" !{(4,3)}*{\bullet}="v14" !{(5,3)}*{\bullet}="v15"
			!{(1,4)}*{\bullet}="v16" !{(2,4)}*{\bullet}="v17" !{(3,4)}*{\bullet}="v18" !{(4,4)}*{\bullet}="v19" !{(5,4)}*{\bullet}="v20"
			"v1"-@[black] "v2" "v2"-@[black] "v3" "v3"-@[black] "v4" "v4"-@[black] "v5"
			"v6"-@[black] "v7" "v7"-@[black] "v8" "v8"-@[black] "v9" "v9"-@[black] "v10"
			"v11"-@[black] "v12" "v12"-@[black] "v13" "v13"-@[black] "v14" "v14"-@[black] "v15"
			"v16"-@[black] "v17" "v17"-@[black] "v18" "v18"-@[black] "v19" "v19"-@[black] "v20"
			"v1"-@[black] "v6" "v6"-@[black] "v11" "v11"-@[black] "v16"
			"v2"-@[black] "v7" "v7"-@[black] "v12" "v12"-@[black] "v17"
			"v3"-@[black] "v8" "v8"-@[black] "v13" "v13"-@[black] "v18"
			"v4"-@[black] "v9" "v9"-@[black] "v14" "v14"-@[black] "v19"
			"v5"-@[black] "v10" "v10"-@[black] "v15" "v15"-@[black] "v20"
		} \]
		\caption{The graph $G_1= P_4 \Box P_5$ of Example \ref{IdNotEnough}.}\label{id}
	\end{figure}

	\begin{example}\label{AllRedundant}
		Suppose that $G_2=P_5 \Box P_6$. Let $f$ be a vertex coloring of $G_2$ such that $(2,2)$, $(2,3)$, $(2,4)$ and $(4,5)$ are red vertices while other vertices of $G_2$ are black, see Figure \ref{all}. This coloring is a distinguishing coloring. However, if we have defined equivalence of colors so that two holographic colors $\check{Q}_{i}^{u_j}$ and $\check{Q}_{i}^{u_t}$ are \emph{equivalent} if there is a vertex-color-preserving isomorphism $\varphi: Q_{i}^{u_j}\longrightarrow Q_{i}^{u_t}$, then Lemma \ref{AUL2} would have implied that $f$ is not distinguishing. This shows that we have to consider each $\alpha\in \mathrm{Aut}(Q)$ separately.
		
	\end{example}
	
	\begin{figure}
		\[ \xygraph{
			!{<0cm,0cm>; <1cm,0cm>:<0cm,1cm>::}
			!{(1,1)}*{\bullet}="v1" !{(2,1)}*{\bullet}="v2" !{(3,1)}*{\bullet}="v3" !{(4,1)}*{\bullet}="v4" !{(5,1)}*{\bullet}="v5"	!{(6,1)}*{\bullet}="v6" 
			!{(1,2)}*{\bullet}="v7" !{(2,2)}*{\bullet}="v8" !{(3,2)}*{\bullet}="v9" !{(4,2)}*{\bullet}="v10"
			!{(5,2)}*{\color[rgb]{1,0,0}\bullet}="v11" !{(6,2)}*{\bullet}="v12" 
			!{(1,3)}*{\bullet}="v13" !{(2,3)}*{\bullet}="v14" !{(3,3)}*{\bullet}="v15" !{(4,3)}*{\bullet}="v16" !{(5,3)}*{\bullet}="v17" !{(6,3)}*{\bullet}="v18" 
			!{(1,4)}*{\bullet}="v19" !{(2,4)}*{\color[rgb]{1,0,0}\bullet}="v20" !{(3,4)}*{\color[rgb]{1,0,0}\bullet}="v21" !{(4,4)}*{\color[rgb]{1,0,0}\bullet}="v22" !{(5,4)}*{\bullet}="v23" !{(6,4)}*{\bullet}="v24" 
			!{(1,5)}*{\bullet}="v25" !{(2,5)}*{\bullet}="v26" !{(3,5)}*{\bullet}="v27" !{(4,5)}*{\bullet}="v28" !{(5,5)}*{\bullet}="v29" !{(6,5)}*{\bullet}="v30"
			"v1"-@[black] "v2" "v2"-@[black] "v3" "v3"-@[black] "v4" "v4"-@[black] "v5" "v5"-@[black] "v6"
			"v7"-@[black] "v8" "v8"-@[black] "v9" "v9"-@[black] "v10" "v10"-@[black] "v11" "v11"-@[black] "v12" 
			"v13"-@[black] "v14" "v14"-@[black] "v15" "v15"-@[black] "v16" "v16"-@[black] "v17" "v17"-@[black] "v18"
			"v19"-@[black] "v20" "v20"-@[black] "v21" "v21"-@[black] "v22" "v22"-@[black] "v23" "v23"-@[black] "v24"
			"v25"-@[black] "v26" "v26"-@[black] "v27" "v27"-@[black] "v28" "v28"-@[black] "v29" "v29"-@[black] "v30"
			"v1"-@[black] "v7" "v7"-@[black] "v13" "v13"-@[black] "v19" "v19"-@[black] "v25"
			"v2"-@[black] "v8" "v8"-@[black] "v14" "v14"-@[black] "v20" "v20"-@[black] "v26"
			"v3"-@[black] "v9" "v9"-@[black] "v15" "v15"-@[black] "v21" "v21"-@[black] "v27"
			"v4"-@[black] "v10" "v10"-@[black] "v16" "v16"-@[black] "v22" "v22"-@[black] "v28"
			"v5"-@[black] "v11" "v11"-@[black] "v17" "v17"-@[black] "v23" "v23"-@[black] "v29"
			"v6"-@[black] "v12" "v12"-@[black] "v18" "v18"-@[black] "v24" "v24"-@[black] "v30"
		} \]
		\caption{The graph $G_2 = P_5 \Box P_6$ of Example \ref{AllRedundant}.}\label{all}
	\end{figure}
	
	\begin{example}\label{dist-ex}
		Let $G_3 =P_4 \Box P_5$, whose 2-coloring $f$ is shown in Figure \ref{dist-ex-fig}. We prove that this coloring is distinguishing. Obviously, $P_4 \not\simeq P_5$ implies that $P_4^f \not\simeq P_5^f$ and hence item $i$ of Lemma \ref{AUL2} is satisfied automatically for every coloring of $G_3$. On the other hand, we know that the automorphism groups of $P_4$ and $P_5$ have only one non-identity element. Suppose that $\Aut(P_4)=\{\id_{P_4}, \gamma\}$ and $\Aut(P_5)=\{\id_{P_5},\sigma\}$. It is straightforward to check that item $ii$ of Lemma \ref{AUL2} holds when $(\alpha,\beta)$ is $(\id_{P_4},\sigma)$ or $(\id_{P_5},\gamma)$. For $(\alpha,\beta)=(\gamma,\sigma)$ or $(\alpha,\beta)=(\sigma,\gamma)$, we just need to note that $\sigma$ has a fixed vertex, say $v\in P_5$ for which $\check{Q}_1^{v}$ is not $\gamma$-equivalent to itself. Consequently, item $ii$ of Lemma \ref{AUL2} is also met and therefore $f$ is a distinguishing coloring.
	\end{example}

	\begin{figure}
		\[ \xygraph{
			!{<0cm,0cm>; <1cm,0cm>:<0cm,1cm>::}
			!{(1,1)}*{\bullet}="v1" !{(2,1)}*{\bullet}="v2" !{(3,1)}*{\bullet}="v3" !{(4,1)}*{\bullet}="v4" !{(5,1)}*{\bullet}="v5"
			!{(1,2)}*{\bullet}="v6" !{(2,2)}*{\bullet}="v7" !{(3,2)}*{\bullet}="v8" !{(4,2)}*{\color[rgb]{1,0,0}\bullet}="v9" !{(5,2)}*{\bullet}="v10"
			!{(1,3)}*{\bullet}="v11" !{(2,3)}*{\color[rgb]{1,0,0}\bullet}="v12" !{(3,3)}*{\color[rgb]{1,0,0}\bullet}="v13" !{(4,3)}*{\bullet}="v14" !{(5,3)}*{\bullet}="v15"
			!{(1,4)}*{\bullet}="v16" !{(2,4)}*{\bullet}="v17" !{(3,4)}*{\bullet}="v18" !{(4,4)}*{\bullet}="v19" !{(5,4)}*{\bullet}="v20"
			"v1"-@[black] "v2" "v2"-@[black] "v3" "v3"-@[black] "v4" "v4"-@[black] "v5"
			"v6"-@[black] "v7" "v7"-@[black] "v8" "v8"-@[black] "v9" "v9"-@[black] "v10"
			"v11"-@[black] "v12" "v12"-@[black] "v13" "v13"-@[black] "v14" "v14"-@[black] "v15"
			"v16"-@[black] "v17" "v17"-@[black] "v18" "v18"-@[black] "v19" "v19"-@[black] "v20"
			"v1"-@[black] "v6" "v6"-@[black] "v11" "v11"-@[black] "v16"
			"v2"-@[black] "v7" "v7"-@[black] "v12" "v12"-@[black] "v17"
			"v3"-@[black] "v8" "v8"-@[black] "v13" "v13"-@[black] "v18"
			"v4"-@[black] "v9" "v9"-@[black] "v14" "v14"-@[black] "v19"
			"v5"-@[black] "v10" "v10"-@[black] "v15" "v15"-@[black] "v20"
		} \]
		\caption{The graph $G_3 = P_4 \Box P_5$ of Example \ref{dist-ex}.}\label{dist-ex-fig}
	\end{figure}

	\section{The distinguishing threshold of the Cartesian product of prime graphs}\label{main1}
	
	Using Lemma \ref{AUL2} we are able to calculate the distinguishing threshold of the Cartesian product of prime graphs. 
	
	\begin{theorem}\label{theta-cart-1}
		Let $k\geq 2$ and $G=G_1\Box G_2 \Box \cdots \Box G_k$ be a prime factorization to mutually non-isomorphic connected graphs. Then $$\theta (G)=\max \left\{ \left(\theta(G_i)-1\right)\cdot \vert Q_i\vert\; : \; i=1,\dots,k \right\}+1.$$
	\end{theorem}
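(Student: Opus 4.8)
\noindent The plan is to bypass colorings entirely and reduce the statement to a cycle count, using Lemma~\ref{max-lem} together with the rigidity forced by the factors being pairwise non-isomorphic. Because $G_1,\ldots,G_k$ are connected and pairwise non-isomorphic, Theorem~\ref{autG} shows that the factor-permutation $\pi$ attached to any $\psi\in\Aut(G)$ must be the identity (no two factors may be interchanged). Hence $\Aut(G)=\bigoplus_{i=1}^k\Aut(G_i)=\Aut^F(G)$, and every automorphism has the diagonal form $\psi=(\psi_1,\ldots,\psi_k)$ with $\psi_i\in\Aut(G_i)$, acting on $V(G)=V(G_1)\times\cdots\times V(G_k)$ as the product permutation $\psi_1\times\cdots\times\psi_k$. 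By Lemma~\ref{max-lem} it then suffices to compute $\max\{\vert\psi\vert:\psi\in\Aut(G)\setminus\{\id\}\}$ and add $1$.

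The technical heart is a bound on the number of cycles of a product permutation. If $\sigma$ acts on a set $A$ with cycle lengths $a_1,a_2,\ldots$ and $\tau$ acts on $B$ with cycle lengths $b_1,b_2,\ldots$, then a length-$a_i$ cycle of $\sigma$ paired with a length-$b_j$ cycle of $\tau$ splits into exactly $\gcd(a_i,b_j)$ cycles of $\sigma\times\tau$, each of length $\mathrm{lcm}(a_i,b_j)$, so that $\vert\sigma\times\tau\vert=\sum_{i,j}\gcd(a_i,b_j)$. Since $\gcd(a_i,b_j)\le b_j$, summing gives $\vert\sigma\times\tau\vert\le\vert\sigma\vert\cdot\vert B\vert$. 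Writing $\psi=\psi_i\times\bigl(\prod_{j\neq i}\psi_j\bigr)$, where the second factor acts on the vertex set underlying $Q_i$, this specializes to the inequality
\[\vert\psi\vert\le\vert\psi_i\vert\cdot\vert Q_i\vert \qquad\text{for every } i.\]

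For the upper bound I would take any non-identity $\psi=(\psi_1,\ldots,\psi_k)$ and select an index $i$ with $\psi_i\neq\id_{G_i}$, which exists precisely because $\psi\neq\id$; this coordinate choice is the one genuinely subtle point, since the displayed inequality is only informative where the factor automorphism is nontrivial (on a trivial coordinate $\vert\psi_i\vert=\vert G_i\vert$ and the bound degenerates). For such $i$, Lemma~\ref{max-lem} applied to $G_i$ gives $\vert\psi_i\vert\le\theta(G_i)-1$, whence $\vert\psi\vert\le(\theta(G_i)-1)\vert Q_i\vert\le\max_j(\theta(G_j)-1)\vert Q_j\vert$. For the matching lower bound, pick an index $i^\ast$ attaining the maximum and, by Lemma~\ref{max-lem}, a nonidentity $\beta\in\Aut(G_{i^\ast})$ with $\vert\beta\vert=\theta(G_{i^\ast})-1$; setting $\psi=(\id,\ldots,\beta,\ldots,\id)$ with $\beta$ in coordinate $i^\ast$, the complementary factor is the identity on $Q_{i^\ast}$, all of whose cycles have length $1$, so the $\gcd$-formula collapses to $\vert\psi\vert=\vert\beta\vert\cdot\vert Q_{i^\ast}\vert=(\theta(G_{i^\ast})-1)\vert Q_{i^\ast}\vert$. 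Adding $1$ via Lemma~\ref{max-lem} yields the asserted equality. The only situation requiring separate comment is when some factors are rigid: there $\theta(G_i)-1=0$, those terms drop out, the optimizing index is automatically a non-rigid factor, and if every factor is rigid then both sides equal $1$.
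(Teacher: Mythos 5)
Your proof is correct, and it takes a genuinely different route from the paper's. The paper proves both inequalities through the holographic-coloring machinery of Lemma~\ref{AUL2}: for the lower bound it expands a non-distinguishing $(\theta(G_i)-1)$-coloring $g$ of $G_i$ into a coloring of $G$ with $(\theta(G_i)-1)\cdot\vert Q_i\vert$ colors (each layer $Q_i^v$ receives a palette of $\vert Q_i\vert$ colors determined by $g(v)$), which the automorphism acting as $\sigma$ on the $G_i$-coordinate and trivially elsewhere preserves; for the upper bound it verifies conditions (i) and (ii) of that lemma by a pigeonhole count of $\alpha$-equivalence classes of colored layers. You bypass colorings entirely: pairwise non-isomorphism of the prime factors plus Theorem~\ref{autG} gives $\Aut(G)=\bigoplus_{i}\Aut(G_i)$, and Lemma~\ref{max-lem} reduces the theorem to maximizing the number of cycles of a non-identity product permutation, which you handle with the correct classical fact that an $a$-cycle times a $b$-cycle splits into $\gcd(a,b)$ cycles of length $\mathrm{lcm}(a,b)$. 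The resulting inequality $\vert\psi\vert\le\vert\psi_i\vert\cdot\vert Q_i\vert$, applied at a coordinate where $\psi_i\neq\id$ (your remark that the bound degenerates at identity coordinates is exactly the right point of care), combined with the exact count for $\psi=(\id,\ldots,\beta,\ldots,\id)$, yields both directions, and your treatment of rigid factors is consistent with the convention $\theta=1$ implicit in Lemma~\ref{max-lem}. As for what each approach buys: yours is shorter, needs nothing from Section~\ref{Lem-Sec}, and exhibits the extremal automorphism explicitly; the paper's showcases Lemma~\ref{AUL2}, whose coloring constructions remain usable when factors are isomorphic and condition (i) is no longer automatic. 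Notably, the paper itself switches to precisely your cycle-counting style (via Lemma~\ref{max-lem}) for the lower bound in Theorem~\ref{theta-cart-2}, so your argument is very much within the paper's own toolbox.
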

	
	\begin{proof}
		Choose a non-distinguishing coloring $g$ for $G_i$ with $\theta (G_i)-1$ colors. Then, there is a non-identity $\sigma\in\Aut(G_i)$ such that it preserves $g$, i.e., there are distinct vertices $u_1, u_2 \in G_i$ such that $u_2 = \sigma(u_1)$ and $g(u_1)=g(u_2)$. For a vertex $v\in G_i$ and a fixed ordering of vertices of $Q_i$, if $g(v)=t$, $t=1,\ldots, \theta (G_i)-1$, then color vertices of $Q_i^v$ by colors $t\cdot1,\ldots, t\cdot\vert Q_i \vert $ keeping the same ordering of vertices in their colors. Call the resulting coloring $f$. Then, $\check{Q}_i^{u_1}$ and $\check{Q}_i^{u_2}$ are $\id_{Q_i}$-equivalent, which means that item $ii$ of Lemma \ref{AUL2} is not met by $f$. Thus, $f$ is not a distinguishing coloring. Consequently, for each $i=1,\ldots, k$ we have $\theta (G)\geq  \left(\theta(G_i)-1\right)\cdot \vert Q_i\vert+1$.
		
		Since for $i\neq j$ we know $G_i$ and $G_j$ are non-isomorphic, for any coloring $f$ of $G$ that we want to know if it is distinguishing, it is obvious that $G_i^f \not\simeq G_j^f$. 
		
		Suppose that $f$ is a coloring of $G$ with $\max \left\{ \left(\theta(G_i)-1\right)\cdot \vert Q_i\vert\; : \; i=1,\dots,k \right\}+1$ colors. For each $\alpha\in \Aut(Q_i)$, and for every $\beta\in \Aut(G_i)$ such that $(\alpha,\beta)\neq (\id_{Q_i},\id_{G_i})$, there are at least $\theta(G_i)$ vertices $u_1,\ldots ,u_{\theta(G_i)}$ such that $\check{Q}_i^{u_j}$ and $\check{Q}_i^{u_k}$, $k\neq j$ are not mutually $\alpha$-equivalent. Therefore, there is at least one $u_j$ among those vertices that $\check{Q}_i^{u_j}$ and $\check{Q}_i^{\beta (u_j)}$ are not $\alpha$-equivalent. Therefore, items $i$ and $ii$ of Lemma \ref{AUL2} are both met and consequently, $f$ is a distinguishing coloring.
	\end{proof}
	
	Theorem \ref{autG} implies that $\Aut(G^k)\cong \mathrm{Sym}(k)\oplus \Aut(G)^k$. We use this fact in the proof of the following theorem.
	
	\begin{theorem}\label{theta-cart-2}
		Let $G$ be a connected prime graph and $k\geq 2$ be a positive integer. Then $$\theta (G^k)=\vert G\vert^{k-1}\cdot\max\left\{ \frac{\vert G\vert +1}{2}, \theta(G)-1\right\}+1.$$
	\end{theorem}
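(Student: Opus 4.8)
The plan is to invoke Lemma \ref{max-lem}, which reduces the computation of $\theta(G^k)$ to finding the maximum number of cycles $|\varphi|$ over all non-identity automorphisms $\varphi$ of $G^k$: once that maximum is known, $\theta(G^k)$ is just this value plus one. Since $G$ is connected and prime, the prime factorization of $G^k$ consists of $k$ copies of $G$, so by Theorem \ref{autG} (equivalently, by $\Aut(G^k)\cong \mathrm{Sym}(k)\oplus\Aut(G)^k$) every automorphism has the shape $\varphi(x_1,\dots,x_k)=(\psi_1(x_{\pi(1)}),\dots,\psi_k(x_{\pi(k)}))$ for some $\pi\in\mathrm{Sym}(k)$ and $\psi_i\in\Aut(G)$. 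Writing $n=|G|$ and $M=\theta(G)-1$, I would organize the whole argument around whether $\pi$ is the identity, and show that $\max\{|\varphi|\}=n^{k-1}\max\{\tfrac{n+1}{2},M\}$.

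For the lower bound I would exhibit two explicit non-identity automorphisms realizing the two quantities inside the maximum. Taking $\pi=\id$, $\psi_1=\beta$ with $\beta$ attaining $|\beta|=M$ (which exists by Lemma \ref{max-lem}), and $\psi_2=\dots=\psi_k=\id$, the map acts as $\beta$ on the first coordinate and trivially elsewhere, so it has exactly $n^{k-1}|\beta|=n^{k-1}M$ cycles. Taking instead $\pi=(1\,2)$ and all $\psi_i=\id$ gives the factor swap $\varphi(x_1,x_2,\dots)=(x_2,x_1,\dots)$, whose fixed points are those with $x_1=x_2$ (there are $n^{k-1}$ of them) and whose remaining vertices pair off into transpositions, yielding $n^{k-1}+\tfrac12(n^k-n^{k-1})=n^{k-1}\tfrac{n+1}{2}$ cycles. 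Hence $\theta(G^k)-1\ge n^{k-1}\max\{M,\tfrac{n+1}{2}\}$.

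For the upper bound when $\pi=\id$, the automorphism is the product permutation $\psi_1\times\dots\times\psi_k$ on $V(G)^k$, whose number of cycles equals $\sum\gcd(\ell_{c_1},\dots,\ell_{c_k})$ summed over all choices of one cycle $c_i$ from each factor. The elementary ingredient is that, for a fixed cycle of length $\ell$ in one factor, $\sum_d \gcd(\ell,\ell_d)\le\sum_d \ell_d=n$ as $d$ ranges over the cycles of any single factor. Appending the factors one at a time to a chosen non-identity factor (which has at most $M$ cycles), each step multiplies the cycle count by at most $n$, so $|\psi_1\times\dots\times\psi_k|\le n^{k-1}M$.

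The main obstacle is the case $\pi\ne\id$, which I would handle by bounding fixed points instead of cycles directly. Using the standard estimate $|\varphi|\le |V(G^k)|-\tfrac12 m(\varphi)=n^k-\tfrac12 m(\varphi)$ (each non-trivial cycle moves at least two vertices, so the non-trivial cycles number at most $m(\varphi)/2$), it suffices to show $\varphi$ fixes at most $n^{k-1}$ vertices. The fixed-point equations $x_i=\psi_i(x_{\pi(i)})$ decouple across the cycles of $\pi$: along a cycle $(j_1\,j_2\,\dots\,j_r)$ the coordinate $x_{j_1}$ must be fixed by the composition $\psi_{j_1}\psi_{j_2}\cdots\psi_{j_r}\in\Aut(G)$ and the remaining coordinates are then determined, so each cycle of $\pi$ contributes at most $n$ choices and the number of fixed points is at most $n^{s}$, where $s$ is the number of cycles of $\pi$. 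Because $\pi\ne\id$ forces $s\le k-1$, this yields at most $n^{k-1}$ fixed points and hence $|\varphi|\le n^{k-1}\tfrac{n+1}{2}$. Combining the two cases with the lower bound and adding one then gives the stated formula.
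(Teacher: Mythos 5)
Your proposal is correct, but it reaches the upper bound by a genuinely different route than the paper. Both arguments obtain the lower bound identically: Lemma \ref{max-lem} together with the same two witnesses, namely the factor transposition (with $n^{k-1}\frac{n+1}{2}$ cycles, $n=|G|$) and the lift of a cycle-maximal $\beta\in\Aut(G)$ (with $n^{k-1}(\theta(G)-1)$ cycles). For the upper bound, however, the paper never counts cycles of a general automorphism: it takes an arbitrary coloring with the prescribed number of colors and verifies conditions \emph{i} and \emph{ii} of the holographic-coloring Lemma \ref{AUL2}, importing condition \emph{ii} from the proof of Theorem \ref{theta-cart-1} and excluding $G_i^f\simeq G_j^f$ by a cycle-count contradiction. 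You instead stay entirely inside Lemma \ref{max-lem} and bound $\max\{|\varphi|\}$ over all non-identity $\varphi\in\Aut(G^k)$ directly, using Theorem \ref{autG} to split on the factor permutation $\pi$: for $\pi=\id$, an inductive bound on cycles of product permutations gives $|\varphi|\le n^{k-1}(\theta(G)-1)$; for $\pi\neq\id$, a fixed-point count (at most $n$ choices per cycle of $\pi$, hence at most $n^{k-1}$ fixed vertices) gives $|\varphi|\le n^{k-1}\frac{n+1}{2}$. Your version is more self-contained and elementary --- it needs neither Lemma \ref{AUL2} nor the proof of Theorem \ref{theta-cart-1}, on which the paper's argument leans somewhat informally --- and it makes explicit which automorphisms are extremal; the paper's route buys economy within its own framework, reusing the holographic machinery that it develops anyway and that also drives Theorem \ref{theta-cart-1}.

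One blemish you should repair: your parenthetical claim that for $\pi=\id$ the number of cycles of $\psi_1\times\cdots\times\psi_k$ \emph{equals} $\sum\gcd(\ell_{c_1},\dots,\ell_{c_k})$ is false once $k\ge 3$; for three $2$-cycles the product has $8/\mathrm{lcm}(2,2,2)=4$ cycles, not $\gcd(2,2,2)=2$, the correct summand being $\bigl(\prod_i \ell_{c_i}\bigr)/\mathrm{lcm}(\ell_{c_1},\dots,\ell_{c_k})$. This does not damage your proof, because the induction only ever uses the two-factor identity $|\sigma\times\tau|=\sum_{c,d}\gcd(\ell_c,\ell_d)$, which is correct and immediately yields $|\sigma\times\tau|\le |\sigma|\cdot n$; simply delete or correct the $k$-wise formula.
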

	
	\begin{proof}
		Let $t=\vert G\vert$. First, note that Equation \ref{max-lem} implies that $\theta (G^k) \geq \frac{t^k +t^{k-1}}{2}+1$. This is because there is an automorphism $\beta \in \Aut (G^k)$ which transposes two factors $G_1$ and $G_2$, and its number of cycles is $c(\beta) =\frac{t^k +t^{k-1}}{2}=t^{k-1}\cdot \frac{t +1}{2}$. Similarly, we have $\theta (G^k) \geq t^{k-1}\cdot (\theta(G)-1)+1$, because there is an $\tilde{\alpha} \in \Aut (G^k )$ such that it is a lift from $\alpha \in \Aut (G)$ with $c(\alpha) =\theta(G)-1$. Then, we have $c(\tilde{\alpha}) =t^{k-1}\cdot (\theta (G)-1)$ and consequently, $$\theta (G^k)\geq t^{k-1}\cdot\max\left\{ \frac{t +1}{2}, \theta(G)-1\right\}+1.$$
		
		Now, let $f$ be an arbitrary vertex coloring for $G^k$ with $t^{k-1}\cdot\max\left\{ \frac{t +1}{2}, \theta(G)-1\right\}+1$ colors. From the proof of Theorem \ref{theta-cart-1} we know that for each $i=1,\ldots,k$ the condition $ii$ holds because the number of colors are greater than $$\vert G\vert^{k-1}\cdot \left(\theta(G)-1\right)=  \left(\theta(G_i)-1\right)\cdot\vert Q_i \vert.$$ If for some $i,j\in \{1,\ldots,k\}$, $i\neq j$ we have $G_i^f\simeq G_j^f$, then we must have an isomorphism that maps $G_i^f$ onto $G_j^f$. Whenever this isomorphism maps a vertex $v\in G_i$ onto $u\in G_j$, it maps $\check{Q}_i^v$ onto $\check{Q}_j^u$, which in turn means that the coloring of $Q_i^v$ onto $Q_j^u$ must be equivalent. Therefore, $G_i^f\simeq G_j^f$ implies that there must be a color-preserving automorphism $\gamma$ of $G^k$ such that $c(\gamma) = t^{k-1}\cdot\frac{t +1}{2}$. This is not possible because the number of colors is at least one more than this. Consequently, item $i$ of Lemma \ref{AUL2} is also met and $f$ is a distinguishing coloring.
	\end{proof}
	
	Using the Cartesian prime factorization of a graph and Theorems \ref{theta-cart-1} and \ref{theta-cart-2}, one can easily prove the next theorem, which concludes this section.
	
	\begin{theorem}
		Let $G=G_1^{t_1}\Box G_2^{t_2}\Box \cdots \Box G_k^{t_k}$ be a graph decomposed to its prime factors. Then
		$$\theta(G)=\max\left\{(\theta(G_i^{t_i})-1)\cdot  \frac{\vert G\vert}{\vert G_i^{t_i} \vert} \; : \; i=1,\ldots,k\right\}.$$
	\end{theorem}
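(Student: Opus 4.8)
The plan is to bypass Lemma~\ref{AUL2} and argue directly with Lemma~\ref{max-lem}, so that the only quantity to be computed is the maximal number of cycles of a non-identity automorphism. Write $H_i=G_i^{t_i}$, so that $G=H_1\square H_2\square\cdots\square H_k$ is a decomposition into pairwise relatively prime, mutually non-isomorphic connected graphs (the $G_i$ being distinct primes). Since no prime factor of $H_i$ is isomorphic to one of $H_j$ for $i\neq j$, Theorem~\ref{autG} forbids any permutation of factors across the blocks, so every automorphism of $G$ preserves each block and $\Aut(G)\cong\bigoplus_{i=1}^k\Aut(G_i^{t_i})$. Thus each $\varphi\in\Aut(G)$ has the coordinatewise block form $\varphi=(\alpha_1,\dots,\alpha_k)$ with $\alpha_i\in\Aut(H_i)$, and by Lemma~\ref{max-lem} it suffices to maximise $|\varphi|$ over $\varphi\neq\id$ and add $1$.

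The computational heart is a cycle count for a product permutation. If $\varphi=(\alpha_1,\dots,\alpha_k)$ acts coordinatewise on $V(H_1)\times\cdots\times V(H_k)$, then a tuple $(c_1,\dots,c_k)$ of cycles of the $\alpha_i$, with $c_i$ of length $\ell_i$, spans $\ell_1\cdots\ell_k$ vertices which split into $\ell_1\cdots\ell_k/\operatorname{lcm}(\ell_1,\dots,\ell_k)$ cycles of $\varphi$; hence
\[
|\varphi|=\sum_{(c_1,\dots,c_k)}\frac{\ell_1\cdots\ell_k}{\operatorname{lcm}(\ell_1,\dots,\ell_k)}.
\]
For any fixed index $i_0$ we have $\operatorname{lcm}(\ell_1,\dots,\ell_k)\geq\ell_{i_0}$, so each summand is at most $\prod_{j\neq i_0}\ell_j$ and the sum factorises:
\[
|\varphi|\leq\Bigl(\sum_{c_{i_0}}1\Bigr)\prod_{j\neq i_0}\Bigl(\sum_{c_j}\ell_j\Bigr)=|\alpha_{i_0}|\cdot\prod_{j\neq i_0}|H_j|=|\alpha_{i_0}|\cdot\frac{|G|}{|G_{i_0}^{t_{i_0}}|}.
\]
Choosing $i_0$ with $\alpha_{i_0}\neq\id$ (possible since $\varphi\neq\id$) and applying Lemma~\ref{max-lem} to $H_{i_0}$ to get $|\alpha_{i_0}|\leq\theta(G_{i_0}^{t_{i_0}})-1$ yields the upper bound $|\varphi|\leq\max_i(\theta(G_i^{t_i})-1)\cdot|G|/|G_i^{t_i}|$.

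For the reverse inequality I would exhibit a matching automorphism: for the index $i$ realising the maximum, pick $\alpha_i\in\Aut(G_i^{t_i})$ with $|\alpha_i|=\theta(G_i^{t_i})-1$ (Lemma~\ref{max-lem} applied to $H_i$) and let $\varphi$ act as $\alpha_i$ on the $i$-th block and as the identity on all others. Then $\ell_j=1$ for $j\neq i$, the displayed sum collapses to $|\varphi|=|\alpha_i|\cdot\prod_{j\neq i}|H_j|=(\theta(G_i^{t_i})-1)\cdot|G|/|G_i^{t_i}|$, and Lemma~\ref{max-lem} applied to $G$ (contributing the final $+1$) delivers the value of $\theta(G)$. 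Note that the explicit value of each $\theta(G_i^{t_i})$ is supplied by Theorem~\ref{theta-cart-2}, while the whole statement reduces to Theorem~\ref{theta-cart-1} when every $t_i=1$.

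The only genuine obstacle is the cycle inequality of the second paragraph: one must be certain that distributing non-trivial motion over several blocks can never create more orbits than concentrating all of it in the single block with the largest per-vertex yield. This is exactly what $\operatorname{lcm}(\ell_1,\dots,\ell_k)\geq\ell_{i_0}$ encodes---mixing only fuses prospective cycles through least common multiples and so cannot help---and once this estimate is in hand the remainder is routine bookkeeping with the block decomposition furnished by Theorem~\ref{autG}.
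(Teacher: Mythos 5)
Your proof is correct. Note first that the paper itself gives no argument for this theorem (``Its proof is simple, so it is left to the reader''), so there is no official proof to match; what the surrounding section suggests is a repetition of the coloring-based strategy of Theorems \ref{theta-cart-1} and \ref{theta-cart-2} (explicit non-distinguishing colorings for the lower bound, Lemma \ref{AUL2} for the upper bound), whereas you bypass the holographic machinery entirely and reduce everything to Lemma \ref{max-lem} plus a permutation-theoretic computation. All three of your ingredients are sound: Theorem \ref{autG} together with the mutual non-isomorphism of the primes $G_i$ forces every automorphism to preserve the blocks $H_i=G_i^{t_i}$, so $\Aut(G)\cong\bigoplus_i\Aut(H_i)$ acting coordinatewise; the identity $|\varphi|=\sum_{(c_1,\dots,c_k)}\ell_1\cdots\ell_k/\operatorname{lcm}(\ell_1,\dots,\ell_k)$ is the standard cycle structure of a product permutation; and the estimate $\operatorname{lcm}(\ell_1,\dots,\ell_k)\geq\ell_{i_0}$ makes the sum factor into $|\alpha_{i_0}|\cdot\prod_{j\neq i_0}|H_j|$, which your block-concentrated automorphism attains exactly. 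What this route buys is a self-contained argument that recovers Theorems \ref{theta-cart-1} and \ref{theta-cart-2} as special cases rather than leaning on them, and it isolates the real content of the theorem: spreading motion over several blocks only fuses prospective cycles through least common multiples, so it can never beat concentrating all motion in the block with the largest per-vertex yield.

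Two remarks. First, and importantly, what you actually prove is
\[
\theta(G)=\max\left\{(\theta(G_i^{t_i})-1)\cdot\frac{\vert G\vert}{\vert G_i^{t_i}\vert}\;:\;i=1,\dots,k\right\}+1,
\]
with a trailing $+1$ that the printed statement omits. The printed version cannot be right: it is inconsistent with Theorems \ref{theta-cart-1} and \ref{theta-cart-2}, and it already fails for $P_2\square P_3$, whose three non-identity automorphisms have $3$, $4$ and $3$ cycles, so Lemma \ref{max-lem} gives $\theta=5$ while the printed formula gives $4$. So your argument in fact proves the corrected statement, and you should say so explicitly rather than silently absorbing the discrepancy. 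Second, Lemma \ref{max-lem} and your lower-bound construction presuppose the existence of a non-identity automorphism: in the degenerate case where every $\Aut(G_i^{t_i})$ is trivial (which forces every $t_i=1$), the graph $G$ is asymmetric, $\theta(G)=1$, and the corrected formula still holds because every term in the maximum is $0$; one sentence disposing of this case would make the argument gap-free.
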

	
	\section{Number of non-equivalent distinguishing colorings of grids}\label{main2}
	
	In this section, we calculate $\Phi_k (G\Box H)$ and $\varphi_k (G\Box H)$ when $G$ and $H$ are paths. 
	
	\begin{theorem}
		Let $m,n\geq 2$ be two distinct integers and $k\geq 2$. Then
		$$\Phi_k (P_m \Box P_n)=\frac{1}{4}\left( k^{mn}-k^{m\lceil \frac{n}{2}\rceil}-k^{n\lceil \frac{m}{2}\rceil} -k^{\lceil \frac{mn}{2}\rceil} +2 k^{\lceil \frac{m}{2}\rceil\lceil \frac{n}{2}\rceil}\right).$$
	\end{theorem}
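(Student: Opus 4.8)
The plan is to exploit the fact that, since $m\neq n$, the two path factors are prime and non-isomorphic, so by Theorem \ref{autG} no automorphism can interchange them; hence $\Aut(P_m\square P_n)=\Aut(P_m)\oplus\Aut(P_n)\cong\mathbb{Z}_2\times\mathbb{Z}_2$. Writing $\rho$ for the reflection acting on the $P_m$-coordinate and $\sigma$ for the reflection on the $P_n$-coordinate, this group is exactly $\{\id,\rho,\sigma,\rho\sigma\}$, of order $4$. A distinguishing coloring is precisely one whose stabilizer is trivial, so by orbit--stabilizer every orbit of distinguishing colorings has size $4$; consequently $\Phi_k(P_m\square P_n)=\tfrac14\,D_k$, where $D_k$ is the number of distinguishing $k$-colorings of $G=P_m\square P_n$. (As a sanity check, the same reduction with $\Aut(P_n)\cong\mathbb{Z}_2$ recovers the known value $\Phi_k(P_n)=\tfrac12(k^{n}-k^{\lceil n/2\rceil})$.)

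To count $D_k$ I would use inclusion--exclusion over the three non-identity automorphisms. A coloring fails to be distinguishing exactly when it lies in $\mathrm{Fix}(\rho)\cup\mathrm{Fix}(\sigma)\cup\mathrm{Fix}(\rho\sigma)$, where $\mathrm{Fix}(\alpha)$ is the set of $k$-colorings preserved by $\alpha$. A coloring is fixed by $\alpha$ if and only if it is constant on each cycle of $\alpha$, so $|\mathrm{Fix}(\alpha)|=k^{|\alpha|}$. Hence $D_k=k^{mn}-|\mathrm{Fix}(\rho)\cup\mathrm{Fix}(\sigma)\cup\mathrm{Fix}(\rho\sigma)|$, and everything reduces to counting cycles.

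The cycle counts are the heart of the argument. The reflection $\rho$ fixes the $P_n$-coordinate and folds the $P_m$-coordinate, giving $n\lceil m/2\rceil$ cycles; symmetrically $\sigma$ has $m\lceil n/2\rceil$ cycles. The product $\rho\sigma$ is the rotation $(i,j)\mapsto(m+1-i,\,n+1-j)$, and here one must watch parities: a central fixed vertex exists if and only if both $m$ and $n$ are odd. In every case, however, the number of cycles is $(mn+F)/2=\lceil mn/2\rceil$, where $F\in\{0,1\}$ counts fixed vertices. The crucial simplification for the intersection terms is that any two of $\rho,\sigma,\rho\sigma$ already generate the full group $\{\id,\rho,\sigma,\rho\sigma\}$; therefore every pairwise intersection and the triple intersection coincide with the set of colorings fixed by the whole group. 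Since the group acts as independent reflections on the two factors, that common set has $\lceil m/2\rceil\cdot\lceil n/2\rceil$ orbits and so contains $k^{\lceil m/2\rceil\lceil n/2\rceil}$ colorings.

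Assembling inclusion--exclusion then gives
\[
|\mathrm{Fix}(\rho)\cup\mathrm{Fix}(\sigma)\cup\mathrm{Fix}(\rho\sigma)|
= k^{n\lceil m/2\rceil}+k^{m\lceil n/2\rceil}+k^{\lceil mn/2\rceil}-2k^{\lceil m/2\rceil\lceil n/2\rceil},
\]
the coefficient $-2$ arising as $-3+1$ from the three pairwise terms and the one triple term. Substituting into $D_k=k^{mn}-(\cdots)$ and dividing by $4$ yields the claimed formula. The only genuinely delicate point is the parity bookkeeping for the rotation $\rho\sigma$ together with the observation that its cycle count collapses uniformly to $\lceil mn/2\rceil$; the remaining steps are routine orbit counting.
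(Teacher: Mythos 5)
Your proposal is correct and follows essentially the same route as the paper's proof: identify $\Aut(P_m\square P_n)\cong\mathbb{Z}_2\oplus\mathbb{Z}_2$ via the prime factorization theorem, divide the count of distinguishing colorings by the orbit size $4$, and run inclusion--exclusion over the three non-identity automorphisms with cycle counts $n\lceil m/2\rceil$, $m\lceil n/2\rceil$, and $\lceil mn/2\rceil$. Your observation that any two non-identity elements generate the whole Klein four-group (so all pairwise and triple intersections collapse to the $k^{\lceil m/2\rceil\lceil n/2\rceil}$ colorings fixed by everything) is exactly the overlap computation the paper states more tersely, so there is nothing to add.
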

	\begin{proof}
		Since $P_m \not\simeq P_n$, Theorem \ref{autG} implies that $\Aut(P_m\Box P_n)\cong \mathbb{Z}_2 \oplus \mathbb{Z}_2$ has four elements: the identity, a reflection over $P_m$, a reflection over $P_n$ and a rotation of $180^\circ$. Then, for each distinguishing coloring, there are 3 other distinguishing colorings that are equivalent to it. Therefore, the number of non-equivalent distinguishing colorings of $P_m\Box P_n$ is one-fourth of the total number of distinguishing colorings of $P_m\Box P_n$.
		
		The total number of vertex coloring of $P_m\Box P_n$ with at most $k$ colors is $k^{mn}$. The number of colorings that are preserved by the reflection over $P_m$, the reflection over $P_n$, and the rotation of $180^\circ$ are $k^{n\lceil \frac{m}{2}\rceil}$, $k^{m\lceil \frac{n}{2}\rceil}$ and $k^{\lceil \frac{mn}{2}\rceil}$ respectively. These numbers overlap by some colorings, those that are preserved by two out of three non-identity automorphisms, which are $3k^{\lceil \frac{m}{2}\rceil\lceil \frac{n}{2}\rceil}$ colorings. And, there are colorings that are preserved by all three non-identity automorphisms, which are $k^{\lceil \frac{m}{2}\rceil\lceil \frac{n}{2}\rceil}$ ones. Now, the result follows inclusion-exclusion.
	\end{proof}
	
	When the two factors are the same, calculations via inclusion-exclusion are a bit harder as there are automorphisms that exchange the factors. When the size of the automorphism group increases, it becomes hard to apply and validate an inclusion-exclusion argument. However, if we can classify similar states, we can easily use the argument. We do this in the proof of the following theorem.
	
	\begin{theorem}
		Let $n\geq 3$ be an integer and $k\geq 2$. Then $$\Phi_k (P_n^2)=\frac{1}{8}\left( k^{n^2} -k^{\lceil \frac{n^2}{2}\rceil} - 2k^{n\lceil\frac{n}{2}\rceil}-2k^{n\left(\frac{n+1}{2}\right) } +2k^{\lceil \frac{n}{2}\rceil^2}+2k^{\lceil \frac{n}{2}\rceil \lceil \frac{n+1}{2}\rceil}\right).$$
	\end{theorem}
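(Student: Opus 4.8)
The plan is to follow the strategy of the previous theorem: first pin down $\Aut(P_n^2)$, then observe that distinguishing colorings are exactly those with trivial stabilizer, so that $\Phi_k(P_n^2)$ is $1/|\Aut(P_n^2)|$ times the number of such colorings, and finally count those colorings by an inclusion--exclusion over the subgroup lattice. Since $\Aut(P_n)\cong\mathbb{Z}_2$, Theorem \ref{autG} gives a group of order $8$, namely the dihedral group $D_4$ realized as the eight symmetries of the $n\times n$ square grid: the identity, the two reflections $\sigma_1,\sigma_2$ across the horizontal and vertical midlines, the two diagonal reflections $\tau,\tau'$ (the factor swap $(i,j)\mapsto(j,i)$ and the anti-diagonal reflection), the half-turn $r^2$, and the two quarter-turns $r,r^3$ (each a factor swap composed with a single-coordinate reflection). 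A distinguishing coloring has trivial stabilizer, so its $\Aut(P_n^2)$-orbit has exactly $8$ members; hence $\Phi_k(P_n^2)=\tfrac18 N$, where $N$ is the number of vertex colorings with palette $\{1,\dots,k\}$ preserved by no nonidentity automorphism.

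To compute $N$, for each subgroup $H\le D_4$ let $o(H)$ denote the number of $H$-orbits on the $n^2$ vertices, so that the number of colorings fixed by every element of $H$ equals $k^{o(H)}$. By Möbius inversion on the subgroup lattice---equivalently, inclusion--exclusion over the fixed sets $F_\alpha$, after noting $F_r=F_{r^3}\subseteq F_{r^2}$ so that the quarter-turns are absorbed---one has $N=\sum_{H\le D_4}\mu(\{e\},H)\,k^{o(H)}$. The subgroups of $D_4$ are: the trivial group; five of order $2$, generated by $r^2,\sigma_1,\sigma_2,\tau,\tau'$; the cyclic $\langle r\rangle\cong C_4$; the two Klein four-groups $V_1=\{e,r^2,\sigma_1,\sigma_2\}$ and $V_2=\{e,r^2,\tau,\tau'\}$; and $D_4$ itself. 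A short computation of the subgroup-lattice Möbius function gives $\mu(\{e\},\{e\})=1$, $\mu=-1$ on each order-$2$ subgroup, $\mu=+2$ on each of $V_1,V_2$, and $\mu=0$ on $\langle r\rangle$ and on $D_4$. Thus the quarter-turn subgroup and the whole group contribute nothing, and only the six subgroups with nonzero $\mu$ survive.

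It remains to evaluate the six orbit counts, which is where the parity of $n$ must be handled and uniformized with ceilings. Directly, $o(\{e\})=n^2$; the half-turn satisfies $o(r^2)=\lceil n^2/2\rceil$; each midline reflection acts on a single coordinate, giving $o(\sigma_1)=o(\sigma_2)=n\lceil n/2\rceil$; each diagonal reflection fixes its $n$-point diagonal and pairs the remaining vertices, giving $o(\tau)=o(\tau')=\tfrac{n(n+1)}2$; the product action of $V_1$ gives $o(V_1)=\lceil n/2\rceil^2$; and a Burnside count inside $V_2$ gives $o(V_2)=\tfrac14(n^2+2n+\epsilon)=\lceil n/2\rceil\lceil (n+1)/2\rceil$, where $\epsilon=1$ for odd $n$ and $\epsilon=0$ for even $n$. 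Substituting these exponents into $N=k^{n^2}-\bigl(k^{o(r^2)}+k^{o(\sigma_1)}+k^{o(\sigma_2)}+k^{o(\tau)}+k^{o(\tau')}\bigr)+2k^{o(V_1)}+2k^{o(V_2)}$ and dividing by $8$ yields the claimed formula.

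I expect the main obstacle to be essentially bookkeeping of two kinds. First, justifying the Möbius values---equivalently, organizing the inclusion--exclusion so that the fixed sets of the quarter-turns together with all triple and quadruple overlaps cancel, leaving only the two Klein four-group terms with coefficient $+2$; this is exactly the subtlety that did not arise in the $P_m\square P_n$ case with $m\neq n$, because there the factor swap is absent and $\Aut\cong\mathbb{Z}_2\oplus\mathbb{Z}_2$. Second, the case split on the parity of $n$ when converting the raw orbit counts into single ceiling expressions, in particular checking that $\tfrac14(n^2+2n+\epsilon)$ equals $\lceil n/2\rceil\lceil(n+1)/2\rceil$ for both parities.
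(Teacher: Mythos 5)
Your proposal is correct, and its skeleton matches the paper's: identify $\Aut(P_n^2)$ as the dihedral group of order $8$, note that distinguishing colorings are exactly those with trivial stabilizer so that $\Phi_k(P_n^2)=\tfrac{1}{8}N$, and then compute $N$ by inclusion--exclusion over the automorphism group. Where you genuinely diverge is in how that inclusion--exclusion is organized. The paper splits the non-distinguishing colorings into those preserved by some rotation ($R_k$) and those preserved by some reflection but by no rotation ($S_k$); it evaluates $R_k=k^{\lceil n^2/2\rceil}$ by a small inclusion--exclusion over the three rotations, and it evaluates $S_k$ by subtracting, from the counts of reflection-preserved colorings, those also preserved by the $180^\circ$ rotation, relying on the geometric observation that a coloring preserved by two distinct reflections is automatically preserved by a rotation (so no further correction terms are needed). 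You instead run M\"obius inversion over the subgroup lattice of the dihedral group, which mechanizes all of this: the values $\mu=0$ on $\langle r\rangle$ and on the full group explain why the quarter-turns drop out (the paper's remark that $90^\circ$- or $270^\circ$-invariance implies $180^\circ$-invariance), and the values $\mu=+2$ on the two Klein four-groups produce exactly the paper's two add-back terms $2k^{\lceil n/2\rceil^2}$ and $2k^{\lceil n/2\rceil\lceil(n+1)/2\rceil}$. Your orbit counts agree with the paper's exponents in every case, including the Burnside verification that $o(V_2)=\lceil n/2\rceil\lceil(n+1)/2\rceil$ for both parities of $n$. What your route buys is checkability and generality: the coefficients emerge from a standard lattice computation rather than from case-specific geometric claims, which is valuable given that the paper itself concedes its argument is ``quite time-taking to present and check.'' What the paper's route buys is that it stays elementary and visual, requiring no poset M\"obius functions, and its key simplification $R_k=k^{\lceil n^2/2\rceil}$ is transparent once the rotations are grouped together.
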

	
	\begin{proof}
		The automorphism group of $G=P_n^2 = P_n\Box P_n$ is isomorphic to the dihedral group $D_8$. It has 8 elements: the identity, rotations of $90^\circ$, $180^\circ$ and $270^\circ$, two reflections over corner vertices, and 2 reflections over the edge $P_n$. Consequently, $$8\times\Phi_k (P_n^2)=  N_k (P_n^2),$$ where $N_k (P_n^2)$ is the total number of (not necessarily non-equivalent) distinguishing colorings of $P_n^2$.
		
		It is quite time-taking to present and check an inclusion-exclusion argument for $\Phi_k(P_n^2)$. To make it easier, let $N_k (P_n^2)=k^{n^2}-R_k(P_n^2)-S_k(P_n^2)$ where $R_k(P_n^2)$ is the number of colorings of $G$ that are preserved by a rotation and $S_k(P_n^2)$ is the number of colorings of $G$ that are preserved by a reflection but not by a rotation. Therefore, by inclusion-exclusion we have $$R_k(P_n^2)=k^{\lceil \frac{n^2}{2}\rceil} +2k^{\lceil \frac{n^2}{4}\rceil}-3k^{\lceil \frac{n^2}{4}\rceil}+k^{\lceil \frac{n^2}{4}\rceil}=k^{\lceil \frac{n^2}{2}\rceil}.$$
		
		To calculate $S_k(P_n^2)$, first note that there are $2k^{n\lceil\frac{n}{2}\rceil}$ colorings that are preserved by vertical and horizontal reflections over $P_n$, and there are $2k^{n\left(\frac{n+1}{2}\right)}$ that are preserved by reflections over corner vertices. We must also note that a coloring that is preserved by rotations of $90^\circ$ or $270^\circ$ is also preserved by the rotation of $180^\circ$. Therefore, we need only subtract those colorings that are preserved by a reflection and the rotation of $180^\circ$. There are $2k^{\lceil\frac{n}{2}\rceil^2}$ colorings that are preserved by reflections over $P_n$ and the rotation of $180^\circ$, and there are $2k^{\lceil \frac{n}{2}\rceil \lceil \frac{n+1}{2}\rceil}$ colorings that are preserved by reflections over corner vertices and the rotation of $180^\circ$.
		
		We also observe that a coloring that is preserved by two different reflections is also preserved by a rotation, see Figures \ref{reflection-edge} that illustrates this fact for $n=4$. That is why we do not need to add them to our inclusion-exclusion procedure because they will be eliminated during the next rounds. Now the proof is completed.
	\end{proof}
	
	\begin{figure}
		\[ \xygraph{
			!{<0cm,0cm>; <1cm,0cm>:<0cm,1cm>::}
			!{(1,1)}*{\bullet}="v1" !{(2,1)}*{\color[rgb]{0,0,1}\bullet}="v2" !{(3,1)}*{\color[rgb]{0,0,1}\bullet}="v3" !{(4,1)}*{\bullet}="v4"
			!{(1,2)}*{\color[rgb]{0,1,0}\bullet}="v6" !{(2,2)}*{\color[rgb]{1,0,0}\bullet}="v7" !{(3,2)}*{\color[rgb]{1,0,0}\bullet}="v8" !{(4,2)}*{\color[rgb]{0,1,0}\bullet}="v9"
			!{(1,3)}*{\color[rgb]{0,1,0}\bullet}="v11" !{(2,3)}*{\color[rgb]{1,0,0}\bullet}="v12" !{(3,3)}*{\color[rgb]{1,0,0}\bullet}="v13" !{(4,3)}*{\color[rgb]{0,1,0}\bullet}="v14"
			!{(1,4)}*{\bullet}="v16" !{(2,4)}*{\color[rgb]{0,0,1}\bullet}="v17" !{(3,4)}*{\color[rgb]{0,0,1}\bullet}="v18" !{(4,4)}*{\bullet}="v19"
			!{(7,1)}*{\color[rgb]{1,0,1}\bullet}="v21" !{(8,1)}*{\color[rgb]{0,1,0}\bullet}="v22" !{(9,1)}*{\color[rgb]{0,0,1}\bullet}="v23" !{(10,1)}*{\bullet}="v24"
			!{(7,2)}*{\color[rgb]{0,1,0}\bullet}="v26" !{(8,2)}*{\color{orange}\bullet}="v27" !{(9,2)}*{\color[rgb]{1,0,0}\bullet}="v28" !{(10,2)}*{\color[rgb]{0,0,1}\bullet}="v29"
			!{(7,3)}*{\color[rgb]{0,0,1}\bullet}="v31" !{(8,3)}*{\color[rgb]{1,0,0}\bullet}="v32" !{(9,3)}*{\color{orange}\bullet}="v33" !{(10,3)}*{\color[rgb]{0,1,0}\bullet}="v34"
			!{(7,4)}*{\bullet}="v36" !{(8,4)}*{\color[rgb]{0,0,1}\bullet}="v37" !{(9,4)}*{\color[rgb]{0,1,0}\bullet}="v38" !{(10,4)}*{\color[rgb]{1,0,1}\bullet}="v39"
			!{(13,1)}*{\bullet}="v41" !{(14,1)}*{\color[rgb]{0,0,1}\bullet}="v42" !{(15,1)}*{\color[rgb]{0,0,1}\bullet}="v43" !{(16,1)}*{\bullet}="v44"
			!{(13,2)}*{\color[rgb]{0,0,1}\bullet}="v46" !{(14,2)}*{\color[rgb]{1,0,0}\bullet}="v47" !{(15,2)}*{\color[rgb]{1,0,0}\bullet}="v48" !{(16,2)}*{\color[rgb]{0,0,1}\bullet}="v49"
			!{(13,3)}*{\color[rgb]{0,0,1}\bullet}="v51" !{(14,3)}*{\color[rgb]{1,0,0}\bullet}="v52" !{(15,3)}*{\color[rgb]{1,0,0}\bullet}="v53" !{(16,3)}*{\color[rgb]{0,0,1}\bullet}="v54"
			!{(13,4)}*{\bullet}="v56" !{(14,4)}*{\color[rgb]{0,0,1}\bullet}="v57" !{(15,4)}*{\color[rgb]{0,0,1}\bullet}="v58" !{(16,4)}*{\bullet}="v59" 
			"v1"-@[black] "v2" "v2"-@[black] "v3" "v3"-@[black] "v4" 
			"v6"-@[black] "v7" "v7"-@[black] "v8" "v8"-@[black] "v9" 
			"v11"-@[black] "v12" "v12"-@[black] "v13" "v13"-@[black] "v14" 
			"v16"-@[black] "v17" "v17"-@[black] "v18" "v18"-@[black] "v19" 
			"v1"-@[black] "v6" "v6"-@[black] "v11" "v11"-@[black] "v16"
			"v2"-@[black] "v7" "v7"-@[black] "v12" "v12"-@[black] "v17"
			"v3"-@[black] "v8" "v8"-@[black] "v13" "v13"-@[black] "v18"
			"v4"-@[black] "v9" "v9"-@[black] "v14" "v14"-@[black] "v19"
			"v21"-@[black] "v22" "v22"-@[black] "v23" "v23"-@[black] "v24" 
			"v26"-@[black] "v27" "v27"-@[black] "v28" "v28"-@[black] "v29" 
			"v31"-@[black] "v32" "v32"-@[black] "v33" "v33"-@[black] "v34" 
			"v36"-@[black] "v37" "v37"-@[black] "v38" "v38"-@[black] "v39" 
			"v21"-@[black] "v26" "v26"-@[black] "v31" "v31"-@[black] "v36"
			"v22"-@[black] "v27" "v27"-@[black] "v32" "v32"-@[black] "v37"
			"v23"-@[black] "v28" "v28"-@[black] "v33" "v33"-@[black] "v38"
			"v24"-@[black] "v29" "v29"-@[black] "v34" "v34"-@[black] "v39"
			"v41"-@[black] "v42" "v42"-@[black] "v43" "v43"-@[black] "v44" 
			"v46"-@[black] "v47" "v47"-@[black] "v48" "v48"-@[black] "v49" 
			"v51"-@[black] "v52" "v52"-@[black] "v53" "v53"-@[black] "v54" 
			"v56"-@[black] "v57" "v57"-@[black] "v58" "v58"-@[black] "v59" 
			"v41"-@[black] "v46" "v46"-@[black] "v51" "v51"-@[black] "v56"
			"v42"-@[black] "v47" "v47"-@[black] "v52" "v52"-@[black] "v57"
			"v43"-@[black] "v48" "v48"-@[black] "v53" "v53"-@[black] "v58"
			"v44"-@[black] "v49" "v49"-@[black] "v54" "v54"-@[black] "v59"
		} \]
		\caption{(Left) A coloring of $P_4 \Box P_4$ that is preserved by the two reflections over $P_4$. Any such coloring is also preserved by the rotation of $180^\circ$. (Middle) A coloring of $P_4 \Box P_4$ that is preserved by the two reflections over corner vertices. Any such coloring is also preserved by the rotation of $180^\circ$. (Right) A coloring of $P_4 \Box P_4$ that is preserved by the reflection over vertical $P_4$ and the reflection over down-left  and up-right corner vertices. Any such coloring is also preserved by the rotation of $90^\circ$.}\label{reflection-edge}
	\end{figure}
	
	When the index $\Phi_k(G)$ is known for $k=D(G),D(G)+1,\ldots$, we can calculate $\varphi_k(G)$ using Equation \ref{Phi-phi}. It gives us the following recursive formula:
	\begin{equation}
		\varphi_k(G)=\Phi_k(G)-\sum_{i=D(G)}^{k-1} {k\choose i}\varphi_{i}(G).
	\end{equation} 
	
	Therefore, at least for paths, the arisen questions in the introduction are fully answered.
	
	
	\section*{Statements and Declarations}
	The authors declare no conflict of interest and no data are associated with this article.
	
	\bibliographystyle{plain}
	\bibliography{bibliography}

\end{document}